\theoremstyle{plain} \newtheorem*{proposition}{Proposition}
\theoremstyle{remark} \newtheorem*{remark}{Remark}
\newcommand{\set}[1]{\left\{\,#1\,\right\}}
\newcommand{\setP}[2]{\set{#1\ \middle|\ #2}}
\DeclareSymbolFont{sfoperators}{OT1}{cmss}{m}{n}
\DeclareSymbolFontAlphabet{\mathsf}{sfoperators}
\def\operator@font{\mathgroup\symsfoperators}\makeatother
\DeclareMathOperator{\add}{add} \DeclareMathOperator{\Db}{D^b}
\DeclareMathOperator{\mmod}{mod}
\DeclareMathOperator{\Ext}{Ext}
\newcommand{\rep}[1]{%
  {%
    \small%
    \begin{matrix}%
      #1%
    \end{matrix}%
  }%
}
\begin{document}

\date{\today}
  
\title[The naive approach for constructing the derived category fails]{The naive approach for constructing the derived category of a
  $d$-abelian category fails}

\author[G.~Jasso]{Gustavo Jasso}
\address[Jasso]{Mathematisches Institut\\
  Universit\"at Bonn\\
  Endenicher Allee 60\\
  D-53115 Bonn\\
  GERMANY} \email{gjasso@math.uni-bonn.de}
\urladdr{https://gustavo.jasso.info}

\author[J.~K\"ulshammer]{Julian K\"ulshammer}
\address[K\"ulshammer]{Institut f\"ur Algebra und Zahlentheorie\\
  Universit\"at Stuttgart\\
  Pfaffenwaldring 57\\
  70569 Stuttgart\\
  GERMANY} \email{julian.kuelshammer@mathematik.uni-stuttgart.de}
\urladdr{http://www.iaz.uni-stuttgart.de/LstAlgZahl/Kuelshammer/}

\begin{abstract}
  Let $k$ be a field. In this short note we give an example of a
  $2$-abelian $k$-category, realized as a $2$-cluster-tilting subcategory
  of the category $\mmod A$ of finite dimensional (right) $A$-modules
  over a finite dimensional $k$-algebra $A$, for which the naive idea
  for constructing its ``bounded derived category'' as
  $2$-cluster-tilting subcategory of the bounded derived category of
  $\mmod A$ cannot work.
\end{abstract}
\subjclass[2010]{Primary: 16G70. Secondary: 16G20}


\maketitle

\section*{Introduction}

Let $d$ be a positive integer. Motivated by Iyama's higher
Auslander--Reiten theory \cite{iyama_higher-dimensional_2007}, the
class of $d$-abelian categories was introduced in
\cite{jasso_n-abelian_2016} as an abstract framework for investigating
the intrinsic properties of $d$-cluster-tilting subcategories of
abelian categories.  Note that $d$-cluster-tilting subcategories of
abelian categories are $d$-abelian categories, see
\cite[Thm. 3.16]{jasso_n-abelian_2016}.  If for a $d$-abelian category
the projective and injective objects coincide, that is it is a
\emph{Frobenius $d$-abelian category}, then its stable category is a
$(d+2)$-angulated category in the sense of
\cite{geiss_n-angulated_2013}.  It is then a natural problem to
construct a ``derived category'' of a given $d$-abelian category as
well, which is expected to be a $(d+2)$-angulated category.

There is the following evidence in support of this expectation. Given
a $d$-representation-finite algebra $A$ in the sense of
\cite{iyama_n-representation-finite_2011}, it is known that there is a
unique $d$-cluster-tilting subcategory $\mathcal{M}(A)$ of $\mmod A$,
the category of finite dimensional (right) $A$-modules, see
\cite[Prop. 1.3]{iyama_cluster_2011}. We remind the reader that
$d$-representation-finite algebras are to be thought of as analogues
of hereditary algebras of finite representation type from the
viewpoint of higher Auslander--Reiten theory. Moreover, the bounded
derived category of $\mmod A$, denoted by $\Db(\mmod A)$, has a
$d$-cluster-tilting subcategory
\[
  \mathcal{U}(A):=\add\setP{M[di]\in\Db(\mmod
    A)}{M\in\mathcal{M}\text{ and }i\in\mathbb{Z}}
\]
which is closed under the $d$-th power of the shift functor of
$\Db(\mmod A)$, see \cite[Thm. 1.21]{iyama_cluster_2011}. Hence, it
follows from \cite[Thm. 1]{geiss_n-angulated_2013} that
$\mathcal{U}(A)$ is a $(d+2)$-angulated category. The category
$\mathcal{U}(A)$ can be thought of as a ``bounded derived category''
of $d$-abelian category $\mathcal{M}(A)$. Note that this case is
particular in that every complex in $\mathcal{U}(A)$ is isomorphic to
a finite direct sum of stalk complexes, reflecting the ``hereditary
nature'' of $\mathcal{M}(A)$.

In this short note, we give an example of a finite dimensional algebra
$A$ of global dimension 4 such that $\mmod A$ has a unique
2-cluster-tilting subcategory $\mathcal{M}(A)$. However, the only
rigid subcategory of $\Db(\mmod A)$ containing $\mathcal{M}(A)$ and
which is closed under the second power of the shift functor of
$\Db(\mmod A)$ is precisely $\mathcal{U}(A)$ as defined above. In this
case, it turns out that $\mathcal{U}(A)$ is neither a
2-cluster-tilting subcategory of $\Db(\mmod A)$ nor is it a
4-angulated category with suspension induced by the second power of
the shift of $\Db(\mmod A)$. This shows that if there is a ``bounded
derived category'' of the 2-abelian $k$-category $\mathcal{M}(A)$,
then in cannot be realized as a 2-cluster-tilting subcategory of
$\Db(\mmod A)$.

\section*{The example}

Let $Q$ be the quiver $1\to 2\to 3\to 4\to 5$ and $A$ be the quotient
of the path algebra $kQ$ by the ideal generated by all paths of length
2 in $Q$. It is well known that the algebras $kQ$ and $A$ are derived
equivalent. The Auslander--Reiten quiver of $\mmod A$ is the
following:
\begin{center}
  \includegraphics{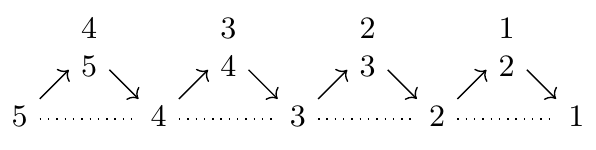}
\end{center}
It is straightforward to verify that
\[
  \mathcal{M}(A):=\add\left(\rep{5}\oplus\rep{4\\5}\oplus\rep{3\\4}\oplus\rep{3}\oplus\rep{2\\3}\oplus\rep{1\\2}\oplus\rep{1}\right)
\]
is the unique $2$-cluster-tilting subcategory of $\mmod A$, see
e.g. \cite[Prop. 6.2]{jasso_n-abelian_2016}. It is also clear that
$\Omega^2(\mathcal{M})\subset\mathcal{M}$, where $\Omega$ is Heller's
syzygy functor.

\begin{proposition}
  Let $\mathcal{U}$ be a rigid subcategory of $\Db(\mmod A)$
  containing $\mathcal{M}(A)$ and satisfying
  $\mathcal{U}[2]=\mathcal{U}$. Then,
  \[
    \mathcal{U}=\mathcal{U}(A):= \add\setP{M[2i]\in\Db(\mmod
      A)}{M\in\mathcal{M}(A)\text{ and }i\in\mathbb{Z}}.
  \]
  Moreover, $\mathcal{U}(A)$ is neither a 2-cluster-tilting
  subcategory of $\Db(\mmod A)$ nor it is a 4-angulated category with
  suspension induced by the second power of the shift of
  $\Db(\mmod A)$.
\end{proposition}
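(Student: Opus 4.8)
The plan is to treat the three assertions of the Proposition in turn: the uniqueness of $\mathcal{U}$ by a finite computation, the failure of $2$-cluster-tiltedness by showing that the Geiss--Keller--Oppermann construction cannot close up, and the failure of $4$-angulatedness---the main point---by excluding \emph{every} $4$-angulated structure with suspension $[2]$.

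\emph{Uniqueness of $\mathcal{U}$.} The inclusion $\mathcal{U}(A)\subseteq\mathcal{U}$ is immediate, since $\mathcal{M}(A)\subseteq\mathcal{U}$, $\mathcal{U}$ is closed under summands, and $\mathcal{U}[2]=\mathcal{U}$. For the converse, recall that $A$ is derived equivalent to the path algebra $kQ$ of $\mathbb{A}_5$, so the indecomposables of $\Db(\mmod A)$ are, up to shift, the $\binom{6}{2}$ interval $kQ$-modules, and the Hom-spaces between them are given by the familiar combinatorics of $\Db(\mmod kQ)$; alternatively, since $A$ is a Nakayama algebra of global dimension $4$, one can argue inside $\Db(\mmod A)$ with $4$-term projective resolutions. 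Transporting $\mathcal{M}(A)$ gives an explicit list of seven indecomposables, and as $\mathcal{U}(A)$ is $[2]$-stable only finitely many $[2]$-orbits of indecomposables need be inspected. One then checks case by case that every indecomposable $X\notin\mathcal{U}(A)$ satisfies $\operatorname{Hom}_{\Db(\mmod A)}(X,M[k])\neq 0$ or $\operatorname{Hom}_{\Db(\mmod A)}(M,X[k])\neq 0$ for some $M\in\mathcal{M}(A)$ and some odd $k$, which forbids $X\in\mathcal{U}$; hence $\mathcal{U}=\mathcal{U}(A)$.

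\emph{$\mathcal{U}(A)$ is not $2$-cluster-tilting.} Consider the exact sequence $0\to\rep{3}\to\rep{2\\3}\to\rep{1\\2}\to\rep{1}\to 0$ in $\mathcal{M}(A)$ (a truncation of the minimal projective resolution of $\rep{1}$) together with its first map $\iota\colon\rep{3}\hookrightarrow\rep{2\\3}$. Were $\mathcal{U}(A)$ a $2$-cluster-tilting subcategory, then by \cite[Thm.~1]{geiss_n-angulated_2013} the Geiss--Keller--Oppermann construction applied to $\iota$ would produce a $4$-angle: one completes $\iota$ to a triangle $\rep{3}\to\rep{2\\3}\to\rep{2}\to\rep{3}[1]$ in $\Db(\mmod A)$ and then iterates, taking at each step the cone of a left $\mathcal{U}(A)$-approximation, the last object obtained after two steps being forced to be $\rep{3}[2]$. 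But a direct computation shows that the minimal left $\mathcal{U}(A)$-approximation of $\rep{2}$ is the inclusion $\rep{2}\hookrightarrow\rep{1\\2}$, with cone $\rep{1}$, and that of $\rep{1}$ is the identity, with cone $0$; so the construction terminates at $0\neq\rep{3}[2]$. Hence $\mathcal{U}(A)$ is not $2$-cluster-tilting.

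\emph{$\mathcal{U}(A)$ is not $4$-angulated with suspension $[2]$.} By \cite[Thm.~1]{geiss_n-angulated_2013} a $2$-cluster-tilting subcategory closed under $[2]$ carries such a structure, so this claim is formally stronger than the previous one, and its proof is the main obstacle. Suppose $\mathcal{U}(A)$ were $4$-angulated with $\Sigma=[2]$. Since $\operatorname{gldim}A=4$ and $\mathcal{U}(A)$ is $[2]$-stable, for $W,X\in\mathcal{U}(A)$ the space $\bigoplus_m\operatorname{Hom}_{\Db(\mmod A)}(W,X[2m])$ is finite dimensional, so applying $\operatorname{Hom}_{\Db(\mmod A)}(W,-)$ and $\operatorname{Hom}_{\Db(\mmod A)}(-,W)$ to a $4$-angle yields \emph{finite} long exact sequences; their Euler characteristics give $[X_0]-[X_1]+[X_2]-[X_3]=0$ in the Grothendieck group of $\mathcal{U}(A)$ for the bilinear form $(W,X)\mapsto\sum_m\dim\operatorname{Hom}_{\Db(\mmod A)}(W,X[2m])$, a form that turns out to be degenerate (its radical contains the class of the analogous sequence $0\to\rep{5}\to\rep{4\\5}\to\rep{3\\4}\to\rep{3}\to 0$). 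Combining these relations with the uniqueness of cones in a $4$-angulated category, one identifies, up to isomorphism, the two middle terms of any $4$-angle extending $\iota$ as $\rep{1\\2}$ and $\rep{1}$, with final map the nonzero element of $\Ext^2_A(\rep{1},\rep{3})=\operatorname{Hom}_{\Db(\mmod A)}(\rep{1},\rep{3}[2])$, and then verifies that this datum cannot satisfy the mapping-cone (higher octahedral) axiom of \cite{geiss_n-angulated_2013}---ultimately for the same reason as above, namely that the $\Db(\mmod A)$-cone $\rep{2}$ of $\iota$ does not lie in $\mathcal{U}(A)$ and its $\mathcal{U}(A)$-resolution does not close up to $\rep{3}[2]$. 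The delicate and genuinely substantive point is that one must rule out \emph{all} $4$-angulated structures with suspension $[2]$, not merely the one that the (non-existent) cluster-tilting structure would induce, and this is where the explicit combinatorics of $\Db(\mmod A)$ and the degeneracy of the above form are essential.
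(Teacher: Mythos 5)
Your identification of $\mathcal{U}$ and your observation that non-$4$-angulatedness formally implies non-$2$-cluster-tiltedness (via \cite[Thm.~1]{geiss_n-angulated_2013}) agree with the paper, and your computation of the tower of left $\mathcal{U}(A)$-approximations over $\iota\colon\rep{3}\to\rep{2\\3}$ is correct as far as it goes. The genuine gap is in the third step, which you yourself single out as the crux. Starting from $\iota$, the candidate $4$-angle your analysis isolates is $\rep{3}\to\rep{2\\3}\to\rep{1\\2}\to\rep{1}\to\rep{3}[2]$, and its connecting morphism lives in $\operatorname{Hom}_{\Db(\mmod A)}(\rep{1},\rep{3}[2])=\Ext^2_A(\rep{1},\rep{3})\cong k$, which is \emph{nonzero}; the underlying complex is just the exact sequence $0\to\rep{3}\to\rep{2\\3}\to\rep{1\\2}\to\rep{1}\to0$, and it does not visibly violate the $\operatorname{Hom}$-exactness constraints of \cite[Prop.~2.5]{geiss_n-angulated_2013}. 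So no contradiction falls out of this morphism, and everything is deferred to the unproved assertion that the datum ``cannot satisfy the mapping-cone axiom''. The supporting devices you propose do not close this: Euler-characteristic relations in the Grothendieck group determine only the classes of the middle terms, not their isomorphism types (that identification needs the $\operatorname{Hom}$-exactness of Prop.~2.5, which you do not invoke for this purpose); the asserted degeneracy of the Euler form is unverified and its relevance unexplained; and refuting the higher octahedral axiom for a single candidate $4$-angle is exactly the hard computation you have not carried out.

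The paper avoids all of this by choosing a different starting morphism: $f\colon\rep{3\\4}\to\rep{2\\3}$, the nonzero map factoring through $\rep{3}$. The same minimality and exactness analysis (now using \cite[Prop.~2.5 (a)]{geiss_n-angulated_2013}: $f$ is not a retraction, so the next map is nonzero, whence the minimal middle terms are $\rep{1\\2}$ and then $\rep{1}$) applies verbatim, but the connecting morphism now lies in $\operatorname{Hom}_{\Db(\mmod A)}(\rep{1},\rep{3\\4}[2])=\Ext^2_A(\rep{1},\rep{3\\4})=0$, because $\rep{3\\4}$ is an injective $A$-module. A vanishing connecting morphism forces $f$ to be a section by Prop.~2.5 again, which is absurd since $\operatorname{Hom}_A(\rep{2\\3},\rep{3\\4})=0$. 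Choosing the initial morphism so that the forced end term is injective is the single idea your proposal is missing; with it, the whole third step collapses to a few lines and the Grothendieck-group apparatus becomes unnecessary.
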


\begin{proof}
  The first claim is a straightforward verification. We provide the
  Auslander--Reiten quiver of $\Db(\mmod A)$ in Figure 1 for the
  convenience of the reader.
  \begin{figure}
	\includegraphics[width=\textheight,angle=90]{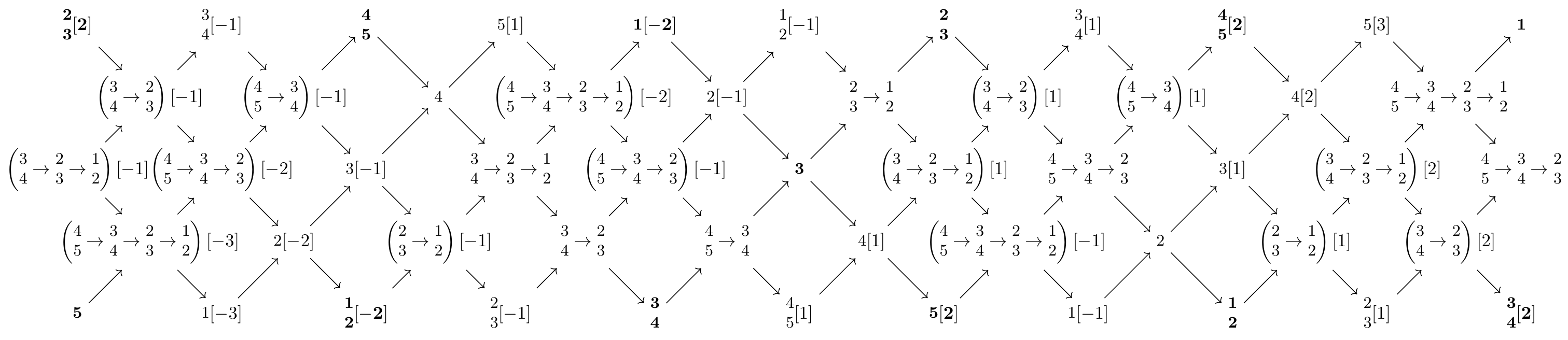}

    \caption{Auslander--Reiten quiver of $\Db(\mmod A).$ The
      subcategory $\mathcal{U}(A)$ is indicated with bold face.}
  \end{figure}
  
  Suppose that $\mathcal{U}(A)$ is a 4-angulated category with respect
  to the second power of the shift functor of $\Db(\mmod A)$ and
  consider the morphism $f\colon\rep{3\\4}\to\rep{2\\3}$. By
  assumption, there exist a 4-angle
  \[
    \begin{tikzcd}
      \rep{3\\4}\rar{f}&\rep{2\\3}\rar{g}&X\rar{h}&Y\rar{i}&\rep{3\\4}[2].
    \end{tikzcd}
  \]
  Since $f$ is not a retraction, \cite[Prop. 2.5
  (a)]{geiss_n-angulated_2013} implies that the morphism $g$ is
  non-zero. Taking a minimal version of $g$, we may assume that
  $X=\rep{1\\2}$. Repeating the same argument for $g$, we deduce that
  $h$ is non-zero and $Y=\rep{1}$. But there are no non-zero
  homomorphisms $\rep{1}\to\rep{2\\3}[2]$ since $\rep{2\\3}$ is an
  injective $A$-module. Hence $i=0$, but this implies that $f$ is a
  section, a contradiction. This shows that $\mathcal{U}(A)$ cannot be
  endowed with the structure of a 4-angulated category for which the
  suspension is given by the second power of the shift functor of
  $\Db(\mmod A)$.  In particular, it cannot be a 2-cluster-tilting
  subcategory of $\Db(\mmod A)$ as this would contradict
  \cite[Thm. 1]{geiss_n-angulated_2013}.
\end{proof}

\begin{remark}
  Note that one can also see that $\mathcal{U}(A)$ is not a
  2-cluster-tilting subcategory of $\Db(\mmod A)$ by observing that
  \[
    \Ext_A^1\left(\rep{3\\4}\to\rep{2\\3}\to\rep{1\\2},\,\rep{3}\right)\neq0
  \]
  while
  \[
    \Ext_A^1\left(\mathcal{U}(A),\,\rep{3\\4}\to\rep{2\\3}\to\rep{1\\2}\right)=0.
  \]
\end{remark}

\bibliographystyle{alpha} \bibliography{zotero}

\begin{thebibliography}{GKO13}

\bibitem[GKO13]{geiss_n-angulated_2013}
Christof Gei\ss, Bernhard Keller, and Steffen Oppermann.
\newblock $n$-angulated categories.
\newblock {\em Journal f\"ur die Reine und Angewandte Mathematik. [Crelle's
  Journal]}, 675:101--120, 2013.

\bibitem[IO11]{iyama_n-representation-finite_2011}
Osamu Iyama and Steffen Oppermann.
\newblock $n$-representation-finite algebras and $n$-{APR} tilting.
\newblock {\em Transactions of the American Mathematical Society},
  363(12):6575--6614, July 2011.

\bibitem[Iya07]{iyama_higher-dimensional_2007}
Osamu Iyama.
\newblock Higher-dimensional {Auslander}--{Reiten} theory on maximal orthogonal
  subcategories.
\newblock {\em Advances in Mathematics}, 210(1):22--50, March 2007.

\bibitem[Iya11]{iyama_cluster_2011}
Osamu Iyama.
\newblock Cluster tilting for higher {Auslander} algebras.
\newblock {\em Advances in Mathematics}, 226(1):1--61, January 2011.

\bibitem[Jas16]{jasso_n-abelian_2016}
Gustavo Jasso.
\newblock $n$-abelian and $n$-exact categories.
\newblock {\em Mathematische Zeitschrift}, pages 1--57, 2016.

\end{thebibliography}

\end{document}